\newtheorem{tw}{Theorem}
\def\aut{\operatorname{Aut}}
\def\Im{\operatorname{Im}}
\def\Re{\operatorname{Re}}
\def\Mod{\operatorname{mod}}
\newtheorem{lem}[tw]{Lemma}
\newtheorem{cor}[tw]{Corollary}
\newtheorem*{df*}{Definition}
\newtheorem*{re*}{Remark}
\title{CHARACTERISATION OF GEODESIC-PRESERVING FUNCTIONS}
\author{MARCIN TOMBIŃSKI}
\date{}
\begin{document}

\maketitle
\begin{abstract}
 Let $\Omega_1$, $\Omega_2$ be two domains in $\mathbb{C}^n$ with Kobayashi metrics $k_{\Omega_i}$ and consider a holomorphic mapping $f \in \mathcal{O}(\Omega_1,\Omega_2)$. Let $\mathfrak{F}_1$ and $\mathfrak{F}_2$ be families of geodesics defined on $\Omega_1$ and $\Omega_2$ respectively, where a geodesic between $z$ and $w$ in $\Omega_i$ is the length minimizing curve for the metric $k_{\Omega_i}$. We say that a holomorphic mapping \textit{preserves geodesics} if for any geodesic $\gamma_1$ in $\mathfrak{F}_1$ its image is a subset of a geodesic $\gamma_2$ in $\mathfrak{F}_2$ ($f(\gamma_1)\subset \gamma_2$). 
 
 We aim to characterise the family of such mappings when $\mathfrak{F}_1$ and $\mathfrak{F}_2$ are the families of Kobayashi geodesics passing through a point in the unit disc $\mathbb{D}$ or in the unit ball $\mathbb{B}^n$. Some
additional results are given in the complex plane  $\mathbb{C}$ and $\mathbb{C}^n$.

\end{abstract}
\section{Introduction}
Let $\Omega_1$, $\Omega_2$ be two domains in $\mathbb{C}^n$ and consider a holomorphic mapping  $f \in \mathcal{O}(\Omega_1,\Omega_2)$. A geodesic $\gamma$ between two points $z$ and $w$ in $\Omega_i$ is the length minimizing curve joining these two points.

The authors of \cite{BZK} studied the following problem. Assume that $M$ and $N$ are two Kobayashi complete manifolds. Let $F: M\rightarrow N$ be a holomorphic mapping that isometrically maps every complex geodesic from a collection $\mathfrak{F}$ of complex geodesics in $M$ onto a complex geodesic in $N$. For which $M$, $N$ and $\mathfrak{F}$ are all holomorphic maps $F$ biholomorphisms? This property is called  the \textit{slice rigidity property}. Complex geodesics are not to be confused with geodesics, as they are holomorphic maps from the unit disc. In a later work \cite{BKZ}, the authors give a precise characterisation of the condition when Kobayashi isometries between families of geodesics are biholomorphisms. 

The problems addressed in both studies revolve around isometries restricted to families of real or complex geodesics. The isometry condition imposed on a map is restrictive. In \cite{BZK} the mappings are limited to Kobayashi isometries. Motivated by this, the author studied the same problem under a weaker condition: we ask which holomorphic mappings preserve geodesics \textit{geometrically}, in the sense that they send each geodesic in one family into a geodesic in another. 
 
 \begin{df*}
     
  Given families of geodesics $\mathfrak{F}_1$ and $\mathfrak{F}_2$ on $\Omega_1$ and $\Omega_2$, we say that a map $f$ is \textit{geodesic-preserving} between two families of geodesics $\mathfrak{F}_1$ and $\mathfrak{F}_2$ if for every geodesic (the image of a geodesic linking two points) $\gamma_1 \in \mathfrak{F_1}$ there exists a geodesic $\gamma_2 \in \mathfrak{F_2}$ such that $f(\gamma_1)\subset\gamma_2$. We denote the family of all such mappings by $GP(\mathfrak{F}_1,\mathfrak{F}_2)$. 
  \end{df*}
  This set is never empty: every constant map sending $\Omega_1$ to a point lying on some geodesic in $\Omega_2$ trivially satisfies the condition.

 In this paper we focus on the most fundamental Kobayashi-hyperbolic domains: the unit disc $\mathbb{D}$ and the unit ball $\mathbb{B}^n$; and holomorphic selfmaps of these domains. These domains are especially significant because the arguments in \cite{BKZ, BZK} reduce the general problem to these model cases via scaling methods. 
 
A geodesic is a map from an interval $[a;b]$ in $\mathbb{R}$ to $\Omega$. When the interval is bounded, we  call $\gamma$ a \emph{geodesic segment}. Since the studied domains $\Omega$ are Kobayashi-complete domains, for any two points in $\Omega$ there exists a geodesic segment joining these two points. Additionally we can take the extension maps from the whole of $\mathbb{R}$ to $\Omega$, with the limits lying on the boundary $\partial\Omega_i$ at $\pm \infty$. A geodesic whose interval is the whole real axis is called a geodesic line. Identify a geodesic with its image $\gamma$.
 
 Therefore, for any point $a$ in its closure $\overline{\Omega}$, we define the corresponding family of geodesics denoted by $$\mathfrak{G}^\Omega_a :=\{ \gamma: a\in \overline{\gamma}, \text{where } \gamma \text{ is the image of a geodesic line in } \Omega \}.$$
 The point $a$ will be referred to as the intersection point of the family $\mathfrak{G}^\Omega_a$.

 Using geometric properties of geodesics through the origin and the invariance of geodesics under automorphisms, we eventually obtain a characterisation of all geodesic-preserving functions between families of geodesics passing through arbitrary points in the ball. For a point $p \in \mathbb{B}^n$ we denote by $M_p$ any automorphism of the unit ball moving 0 to $p$.

 \begin{tw}
Geodesic-preserving mappings $f \in GP(\mathfrak{G}^{\mathbb{B}^n}_a,\mathfrak{G}^{\mathbb{B}^n}_b)$, where $a,b \in \mathbb{B}^n$, are of the following form 
     $$f(z)=M_b\circ g \circ M_a^{-1} (z),$$
     where $g: \mathbb{B}^n \rightarrow \mathbb{B}^n$ is a homogeneous polynomial mapping of degree $m$.
     
 \end{tw}
 
In one complex variable, we also characterise geodesic-preserving functions for families of geodesics with intersection points on the boundary of the disc. In the cases where the intersection points of families of geodesics lie on the boundary of the disc, we will refer to M\"obius transformations of the closed unit disc fixing a point on the boundary $a \in \partial\mathbb{D} $ denoted by $T_a$. These are fractional linear self-maps of the unit disc $T_a$ such that $T_a(a)=a$ for a given $a \in \partial\mathbb{D}$.  
\begin{re*}   
 M\"obius transformations are not to be confused with a smaller class, namely, M\"obius maps which are automorphisms of the unit disc.
 \end{re*}
 \begin{tw}
     Geodesic-preserving functions $f \in GP ( \mathfrak{G}^\mathbb{D}_a, \mathfrak{G}^\mathbb{D}_b)$, where $a,b \in \partial\mathbb{D}$, are of the form $f(z)=\sigma \circ T_a$, where $\sigma$ is a rotation mapping $a$ to $b$.
 \end{tw}
 In several variables, the results rely on the geometric aspect of the families of geodesics in the Siegel domain obtained by the Cayley transform centered at $e_1 = (1,0,\dots, 0)$.
 \begin{tw}
    Let $a,b \in \partial\mathbb{B}^n$. The geodesic-preserving functions $f \in GP ( \mathfrak{G}^{\mathbb{B}^n}_a, \mathfrak{G}^{\mathbb{B}^n}_b)$ are of the form 

$$f(z)=\Phi_{b} \circ C^{-1}_{e_1} \circ  F \circ 
C_{e_1} \circ \Phi_a^{-1} (z),$$
where $C_{e_1}$ is the Cayley transform around $e_1$, $F=(F_1, \tilde{F})$ satisfies $F_1 (z_1,z') = mz_1+b(z')$, $(z_1,z') \in \mathbb{C} \times \mathbb{C}^{n-1}$, $\tilde{F} \in \mathcal{O}( \mathbb{C}^{n-1} )$ such that
$$|\tilde{F}(z')|^2 \leq m |z'|^2 +\Im(b(z')) ;$$
and $\Phi_\zeta \in \aut(\overline{\mathbb{B}^n})$, mapping $e_1$ to $\zeta$.

\end{tw}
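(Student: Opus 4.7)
The plan is to reduce the problem to the Siegel half-space $\mathbb{H}^n = \{(z_1,z') : \Im z_1 > |z'|^2\}$, where geodesics through the distinguished boundary point admit a transparent description. First, since each $\Phi_\zeta$ is a Kobayashi isometry of $\overline{\mathbb{B}^n}$ and hence bijects families of geodesics through corresponding boundary points, conjugating $f$ by $\Phi_a, \Phi_b$ reduces the statement to the case $a=b=e_1$. Applying the Cayley transform $C_{e_1}$, which sends $e_1$ to $\infty$, the problem becomes: characterize holomorphic $G:\mathbb{H}^n\to\mathbb{H}^n$ preserving the family of Kobayashi geodesics through $\infty$. Once $G$ is shown to have the form $(mz_1+b(z'),\,g'(z'))$ with the stated inequality, unfolding $G=C_{e_1}\circ \Phi_b^{-1}\circ f\circ\Phi_a\circ C_{e_1}^{-1}$ yields the claimed formula for $f$.

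The geometric heart of the argument is the identification of geodesics through $\infty$ in $\mathbb{H}^n$ as the vertical rays $\gamma_{c,z_0'}=\{(c+it,z_0'):t>0\}$ indexed by $(c,z_0')\in\partial\mathbb{H}^n\setminus\{\infty\}$ (equivalently $z_0'\in\mathbb{C}^{n-1}$ and $c\in\mathbb{C}$ with $\Im c=|z_0'|^2$). This comes from the classical description of complex geodesics in $\mathbb{B}^n$ as intersections with complex affine lines, tracked through $C_{e_1}$. Writing $G=(G_1,G')$, the inclusion $G(\gamma_{c,z_0'})\subset\gamma_{c',z_0''}$ forces both $G'(c+it,z_0')$ and $\Re G_1(c+it,z_0')$ to be constant in $t$. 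Letting $\Re c$ sweep $\mathbb{R}$ for fixed $z_0'$, each slice $\{z'=z_0'\}$ of $\mathbb{H}^n$ is partitioned into vertical lines along which $G'$ and $\Re G_1$ are constant; equivalently, on every slice both $G'$ and $\Re G_1$ are functions of $\Re z_1$ and $z'$ only.

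From here the analysis is routine. A holomorphic function of $z_1$ that depends only on $\Re z_1$ is constant in $z_1$, giving $G'(z_1,z')=g'(z')$ with $g'\in\mathcal{O}(\mathbb{C}^{n-1})$. For $G_1$, the Cauchy--Riemann equations combined with $\partial\Re G_1/\partial\Im z_1=0$ force $\partial G_1/\partial z_1$ to be a real-valued holomorphic function of $z'$, hence a real constant $m$; integration yields $G_1(z_1,z')=mz_1+b(z')$ with $b\in\mathcal{O}(\mathbb{C}^{n-1})$. Finally, the condition $G(\mathbb{H}^n)\subset\mathbb{H}^n$ reads exactly $m\Im z_1+\Im b(z')\ge|g'(z')|^2$ on the closure, and letting $\Im z_1\to\infty$ forces $m\ge 0$. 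The main obstacle is the geometric classification above; once the dictionary between Kobayashi geodesics through $e_1$ in $\mathbb{B}^n$ and vertical rays in $\mathbb{H}^n$ is in place, the Cauchy--Riemann step is essentially a one-variable argument applied sliceswise.
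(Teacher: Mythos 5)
Your proposal is correct and follows essentially the same route as the paper: reduction to $a=b=e_1$ by automorphisms, transfer to the Siegel domain via the Cayley transform, identification of the geodesics through the distinguished boundary point as vertical rays, and then a Cauchy--Riemann argument showing $G_1$ is affine in $z_1$ with real nonnegative slope while $G'$ depends only on $z'$, with the inequality coming from $G(\mathbb{H}^n)\subset\mathbb{H}^n$. The only organizational difference is that the paper first proves preservation of complex geodesics as a separate theorem (via the identity principle on the components $f_i$, $i>1$) before passing to the Siegel picture, whereas you extract the independence of $G'$ from $z_1$ directly from constancy along vertical rays slicewise --- a mild streamlining of the same argument.
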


Additional results are presented for open simply connected domains in $\mathbb{C}$ and strictly linearly convex in $\mathbb{C}^n$.

 \section{Geodesic-preserving functions in the unit disc}
For the unit disc $\mathbb{D}$ and the unit ball $\mathbb{B}^n$, the families of geodesics give foliations of these sets.

In each case, we begin by considering the case of geodesic-preserving functions between families of geodesics passing through the origin, i.e. $\mathfrak{G}^\mathbb{D}_0$ and $\mathfrak{G}^\mathbb{B}_0$. 
The geodesics passing through $0$ in the unit disc are exactly its diameters. Thus, one expects that geodesic-preserving functions will rotate and possibly contract geodesics. This expectation is confirmed by the next theorem.
\begin{tw} \label{1.1}
    Geodesic-preserving functions $f \in GP(\mathfrak{G}^\mathbb{D}_0)$ are homogeneous polynomials, that is
    \begin{equation}\label{1}
        f(z) = \alpha z^n,
    \end{equation}
    where $\alpha \in \bar{\mathbb{D}}$ and $n \in \mathbb{N}$.
\end{tw}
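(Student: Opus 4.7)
The plan is to exploit the simple geometric fact that the Kobayashi geodesics through $0$ in $\mathbb{D}$ are exactly the (open) diameters $\gamma_\theta := \{r e^{i\theta} : r \in (-1,1)\}$, so the hypothesis $f \in GP(\mathfrak{G}_0^\mathbb{D})$ says: for each $\theta$ there is an angle $\phi(\theta)$ (well-defined modulo $\pi$) with $f(\gamma_\theta) \subset e^{i\phi(\theta)}\mathbb{R}$. Reformulated: the map
\begin{equation*}
r \longmapsto e^{-i\phi(\theta)} f(r e^{i\theta})
\end{equation*}
is holomorphic in the complex variable $r$ on $\mathbb{D}$ and takes real values on $(-1,1)$.

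After disposing of the trivial case $f \equiv \mathrm{const}$ (any constant $c \in \mathbb{D}$ lies on the diameter through $c$, so constants are automatically geodesic preserving, corresponding to $n=0$ in the statement), I would write the Taylor expansion $f(z) = \sum_{k \geq 0} a_k z^k$. Applying Schwarz reflection to the real-valued holomorphic function above --- equivalently, demanding that its Taylor coefficients be real --- I obtain
\begin{equation*}
e^{-i\phi(\theta)}\, a_k\, e^{i k \theta} \in \mathbb{R} \qquad \text{for every } k \geq 0 \text{ and every } \theta.
\end{equation*}

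The heart of the argument is then to rule out the coexistence of two non-zero coefficients $a_j, a_k$ with $j \neq k$: dividing the two reality conditions kills the unknown $\phi(\theta)$ and yields $\tfrac{a_j}{a_k} e^{i(j-k)\theta} \in \mathbb{R}$ for all $\theta$. This forces the continuous map $\theta \mapsto \arg(a_j/a_k) + (j-k)\theta$ to land in the discrete set $\pi\mathbb{Z}$, which is impossible. Thus only one coefficient $a_n$ is non-zero, so $f(z) = a_n z^n$; the requirement $f(\mathbb{D}) \subset \mathbb{D}$ then gives $|a_n| \leq 1$, producing the stated form with $\alpha := a_n \in \overline{\mathbb{D}}$.

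The main obstacle is cleanly extracting the reality condition on each Taylor coefficient; once that step is set up, the rest of the proof is a one-line continuity argument. A minor bookkeeping point is that $\phi(\theta)$ is only defined modulo $\pi$ and could fail to be defined along a direction on which $f$ vanishes identically --- but by the identity principle this can happen only when $f \equiv 0$, which falls in the $\alpha=0$ case.
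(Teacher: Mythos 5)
Your proposal is correct and follows essentially the same route as the paper: expand $f$ in a Taylor series at $0$, use the fact that each diameter is mapped into a diameter to force a reality condition on the coefficients $e^{-i\phi(\theta)}a_k e^{ik\theta}$, and conclude that only one monomial can survive. Your way of ruling out two non-zero coefficients (dividing the two reality conditions and invoking continuity of $\theta \mapsto \arg(a_j/a_k)+(j-k)\theta$) is in fact a more explicit justification of the paper's terser claim that ``since $r$ is arbitrary, all the coefficients $a_{k+1}, a_{k+2},\dots$ must be $0$.''
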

\begin{proof}
    Let $f \in GP(\mathfrak{G}^\mathbb{D}_0)$. 
    
    Assume that $f$ is not constant. The holomorphicity of $f$ ensures that $f$ has a Taylor expansion $f (z) = \sum_{n=0}^{\infty} a_{n}z^n$, $a_n \in \mathbb{C}$, $n \in \mathbb{N}$. We may assume that the real axis is mapped into itself (we may compose $f$ with a rotation). Then  all the coefficients $a_n$ are real. 
    
    Let $k$ be the smallest index  such that $a_k \neq 0$. Take $z = re^{i\phi}$, $\phi \neq 0 [\Mod   \pi]$, belonging to a geodesic (a diameter), $r \in (-1,1)$. Then $$f(z)= a_kr^ke^{ik\phi} +a_{k+1}r^{k+1}e^{i(k+1)\phi} + \dots \ .$$ The image of a diameter is contained in another diameter, therefore $f(re^{i\phi})$ must lie on a line for varying $r$. Since the image is contained in a fixed diameter, the argument of $e^{-ik\phi} f(r e^{i\phi})$ must remain constant for sufficiently small $r$. Hence all higher-order coefficients vanish.

Hence 
    $$f(z) = \alpha z^n,\  \alpha \in \overline{\mathbb{D}}, \ n \in \mathbb{N}.$$
    This completes the proof.

\end{proof}

When $n$ is even and $\alpha \in \partial \mathbb{D}$ the geodesic (diameter) becomes folded into a ray because $2k\phi \equiv 2k (\pi+\phi) [\Mod 2\pi]$, $k \in \mathbb{N}$, whereas when $n$ is odd, the map $z\mapsto z^n$ maps a diameter onto another diameter.
\begin{cor}
    Functions $f \in GP(\mathfrak{G}^\mathbb{D}_0)$ that satisfy the stronger condition that for any $\gamma_1 \in \mathfrak{G}_0^\mathbb{D}$ there exists $\gamma_2 \in \mathfrak{G}_0^\mathbb{D}$ such that $f(\gamma_1)= \gamma_2$, are of the form $f(z)= \alpha z^{2n+1}$, where $\alpha \in \partial\mathbb{D}$ and $n \in \mathbb{N}$.
\end{cor}

\begin{cor}
    If $f \in GP(\mathfrak{G}^\mathbb{D}_0)$ is an automorphism, then $f$ is a rotation.  
\end{cor}

These results extend to families of geodesics $\mathfrak{G}^\mathbb{D}_a$ and $\mathfrak{G}^\mathbb{D}_b$ with $a,b \in \mathbb{D}$. Since automorphisms of the unit disc are Kobayashi isometries, they  map geodesics to geodesics. Hence $M_a(\mathfrak{G}^\mathbb{D}_0)=\mathfrak{G}^\mathbb{D}_a$ and $M_b(\mathfrak{G}^\mathbb{D}_0)=\mathfrak{G}^\mathbb{D}_b$.

\begin{tw}\label{4}
    Let $a, b \in \mathbb{D}$. Geodesic-preserving functions in the unit disc $f \in GP(\mathfrak{G}^\mathbb{D}_a, \mathfrak{G}^\mathbb{D}_b)$ are of the form $f = M_b \circ g \circ M^{-1}_a$, where $g \in GP(\mathfrak{G}^\mathbb{D}_0)$.
\end{tw}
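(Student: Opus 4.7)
The plan is to reduce the general case to the already-settled case of geodesics through the origin by conjugating with Möbius transformations, which are Kobayashi isometries of $\mathbb{D}$ and hence bijectively map geodesics to geodesics.

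First I would record the key observation: for any $p \in \mathbb{D}$, the automorphism $M_p$ of $\mathbb{D}$ is an isometry for the Kobayashi (Poincaré) metric, so $M_p$ induces a bijection between $\mathfrak{G}^\mathbb{D}_0$ and $\mathfrak{G}^\mathbb{D}_p$ given by $\gamma \mapsto M_p(\gamma)$, with inverse $\delta \mapsto M_p^{-1}(\delta)$. In particular every geodesic through $a$ is of the form $M_a(\gamma_0)$ for a unique $\gamma_0 \in \mathfrak{G}^\mathbb{D}_0$, and similarly for $b$.

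Next I would prove the two inclusions. For the easy direction, take $g \in GP(\mathfrak{G}^\mathbb{D}_0)$ and set $f = M_b \circ g \circ M_a^{-1}$. Given $\gamma_a \in \mathfrak{G}^\mathbb{D}_a$, write $\gamma_a = M_a(\gamma_0)$ with $\gamma_0 \in \mathfrak{G}^\mathbb{D}_0$; then $M_a^{-1}(\gamma_a) = \gamma_0$, and by hypothesis $g(\gamma_0) \subset \gamma_0'$ for some $\gamma_0' \in \mathfrak{G}^\mathbb{D}_0$, so $f(\gamma_a) \subset M_b(\gamma_0') \in \mathfrak{G}^\mathbb{D}_b$. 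For the other direction, given $f \in GP(\mathfrak{G}^\mathbb{D}_a, \mathfrak{G}^\mathbb{D}_b)$, define $g = M_b^{-1} \circ f \circ M_a$; then $g$ is holomorphic $\mathbb{D} \to \mathbb{D}$, and the same chain of maps (read backwards) shows that for every $\gamma_0 \in \mathfrak{G}^\mathbb{D}_0$ one has $g(\gamma_0) \subset M_b^{-1}(\gamma_b)$ for some $\gamma_b \in \mathfrak{G}^\mathbb{D}_b$, which again belongs to $\mathfrak{G}^\mathbb{D}_0$.

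I do not expect any serious obstacle here: the whole content is bookkeeping with the conjugation $f \leftrightarrow M_b^{-1} \circ f \circ M_a$, which is a bijection between $GP(\mathfrak{G}^\mathbb{D}_a, \mathfrak{G}^\mathbb{D}_b)$ and $GP(\mathfrak{G}^\mathbb{D}_0)$. The only small point to make explicit is that Möbius automorphisms genuinely send $\mathfrak{G}^\mathbb{D}_0$ onto $\mathfrak{G}^\mathbb{D}_a$ and not merely into it — this follows because they are bijections on $\overline{\mathbb{D}}$ and Kobayashi-isometries, so they preserve the family of all geodesics and the incidence relation with a chosen point. Combining this bijection with Theorem \ref{1.1} would then also yield the explicit form $f(z) = M_b(\alpha (M_a^{-1}(z))^n)$ for some $\alpha \in \overline{\mathbb{D}}$ and $n \in \mathbb{N}$.
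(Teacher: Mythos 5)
Your proposal is correct and follows essentially the same route as the paper: reduce to $\mathfrak{G}^\mathbb{D}_0$ by conjugating with the Möbius automorphisms $M_a$, $M_b$, using that these map geodesics through $0$ bijectively onto geodesics through $a$ (resp.\ $b$). You spell out the two-inclusion bookkeeping that the paper leaves implicit, and your justification (Möbius maps are Kobayashi isometries, hence preserve geodesics and the incidence with a point) is in fact more precise than the paper's appeal to mere conformality.
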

\begin{proof}
 Observe that $f \in GP(\mathfrak{G}^{\mathbb{D}}_a,\mathfrak{G}^{\mathbb{D}}_b)$ if and only if $M_b^{-1} \circ f \circ M_a \in GP(\mathfrak{G}_0^{\mathbb{D}})$.

\end{proof}

 The geometry of the family of geodesics $\mathfrak{G}^\mathbb{D}_a$ changes when the intersection point moves to the boundary. In particular, geodesics are no longer diameters but curves asymptotic to the same boundary point. Consequently, the argument used in the interior case no longer applies.
 
 Observe that any nonconstant holomorphic geodesic-preserving function $f\in GP ( \mathfrak{G}^{\mathbb{D}}_a, \mathfrak{G}^\mathbb{D}_b)$, maps the intersection point $a$ to the intersection point $b$ and $\lim f(x)=b $ as $x$ tends to $a$.  We now classify all geodesic-preserving functions for families $\mathfrak{G}^{\mathbb{D}}_a$ with boundary intersection point $a \in \partial \mathbb{D}$.

\begin{tw}
    Let $a \in \partial\mathbb{D}$. The geodesic-preserving functions for the family of geodesics $\mathfrak{G}^\mathbb{D}_a$ are  M\"obius transformations which fix the point $a$. 
\end{tw}

\begin{proof} Let $a$ be a point on the unit circle $\partial\mathbb{D}$ and $f$ be a geodesic-preserving function for the family $\mathfrak{G}_a^\mathbb{D}$, $f\in GP(\mathfrak{G}_a^\mathbb{D})$.

    Assume $f$ is nonconstant. 
    
    Without loss of generality, by composing with an automorphism of the unit disc, we may assume that $f(0)=0$ and $a=1$. For each $p$ in $\mathbb{D}$, let 
    $$ T_p (z) = \frac{1-\bar{p}}{1-p} \frac{z-p}{1- \bar{p}z}$$
    be an automorphism mapping the geodesic through $p$ and 1 to the real axis.
   
Define
$$L_p(t)= T_{f(p)} ( f (T_p^{-1}(t))), \ t \in (-1,1).$$
Then $L_p$ maps the interval $(-1,1)$ into itself and preserves 0. Its Taylor series at 0 has only real coefficients:
$$L_p(t) = \alpha_1 t + \alpha_2 t^2 + \dots, \ \alpha_i \in \mathbb{R}.$$ Computing the derivative of $L_p$ at 0 shows that
 $$(L_p)' (0) = \frac{1-|p|^2}{1-|f(p)|^2}\frac{1-\overline{f(p)}}{1-f(p)} f'(p) \frac{1-p}{1-\bar{p}} \in \mathbb{R}$$
 for each $p$. This leads to the differential identity

 $$\frac{(1-p)^2}{(1-f(p))^2} f'(p) = C_p \in \mathbb{R}.$$
 Although $C_p$ may depend on $p$, the left side defines a holomorphic real-valued function of $p$. Since every holomorphic real-valued function on a connected domain is constant, therefore $C_p$ must be constant $C_p =C$. Hence  

    \begin{align*}
    \frac{f'(p)}{(1-f(p))^2} &= \frac{C}{(1-p)^2} \\
     \left( \frac{1}{1-f(p)} \right)' &= C \left( \frac{1}{1-p} \right)' \\
    \frac{1}{1-f(p)}&=\frac{C}{1-p}+B.
    \end{align*}
Using $f(0)=0$, the ODE yields
$$f(z)=\frac{Cz}{1-(1-C)z},$$
where $C$ has to be in the interval $(0,1]$ to ensure $f\in \mathcal{O}(\mathbb{D})$. These are the geodesic-preserving functions in $GP(\mathfrak{G}_1^{\mathbb{D}})$ that also fix 0. To obtain the full characterisation we have to compose these functions with automorphisms of the unit disc fixing 1.

\end{proof}
Next, we study mixed cases: when one family of geodesics has intersection point in the interior and the other on the boundary. If the intersection point of geodesics in the source lies in the interior of the disc while the intersection point in the target lies on the boundary, then any geodesic-preserving function must be constant because the image of interior geodesics cannot all accumulate to a boundary point unless the map collapses. 

The remaining nontrivial case is therefore when the source family intersects at the boundary and the target family intersects in the interior. This case leads to exponential-type functions after applying a Cayley transform. 

Let $f$ be a nonconstant holomorphic geodesic-preserving function between $\mathfrak{G}^\mathbb{D}_1$ and $\mathfrak{G}^\mathbb{D}_0$. Apply the Cayley transform $C(z)= \frac{1+z}{1-z}$ which maps $\mathbb{D}$ biholomorphically onto the right half-plane $\mathbb{H}=\{z: \Re(z)>0\}$. Under this map the geodesics from $\mathfrak{G}^\mathbb{D}_1$ are mapped to horizontal half-lines ($\{z_0+t:t>0,\Re(z_0)=0\}$). 

Let 
$$F=f\circ C^{-1}: \mathbb{H} \rightarrow \mathbb{D}.$$
Because geodesics are mapped to geodesics, for each fixed $z\in \mathbb{H}$ and $t\geq0$
$$\frac{F(z+t)}{F(z)} = c(t)\in \mathbb{R}$$
for any $z \in \mathbb{H}$ and $c$ is a real-valued function for $t\in \mathbb{R}^+$. Note that the function $F$ does not vanish on $\mathbb{H}$. The function $c(t)$ does not depend on $z$ because it is a holomorphic real-valued function on $z$, therefore it must be constant. This can be rewritten as $F(z+t)=F(z)c(t).$ Taking the derivative with respect to $z$ while fixing $t$ we obtain $F'(z+t)=F'(z)c(t)$. Observe that $F(z) \neq 0$ (otherwise $F \equiv0$). Dividing this equation by the previous one we get a logarithmic derivative
$$G(z)\coloneqq \frac{F'(z)}{F(z)} $$
satisfying $G(z+t)=G(z)$ for all $t>0$. As a holomorphic function invariant under real translations, $G$ must be constant. 
$$G(z)\equiv\alpha.$$
Therefore $F$ must have an exponential form $$F(z)=e^{\alpha z +\beta}.$$  
Horizontal half-lines have to be mapped to horizontal half-lines, therefore $\alpha$ must be real.
Additionally, the condition $|F(z)|<1$ becomes
$$\Re(\alpha z+\beta) <0, $$
for all $z \in \mathbb{H}$. Thus $\alpha$ is negative and $-\beta \in \overline{\mathbb{H}}$. Returning to the disc via $f=F \circ C$, we obtain the full classification.
\begin{tw}
    
The nonconstant geodesic-preserving functions between $\mathfrak{G}^\mathbb{D}_1$ and $\mathfrak{G}^\mathbb{D}_0$ are exactly
$$f(z) = e^{a (\frac{1+z}{1-z})-\beta},$$
where $a <0$ and $\Re(\beta)\geq 0$.
\end{tw}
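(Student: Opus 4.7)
The plan is to transport the problem to the right half-plane via the Cayley transform $C(z)=\tfrac{1+z}{1-z}$, reduce the geodesic-preserving hypothesis to a multiplicative functional equation on $\mathbb{H}$, solve it, and finally translate the range condition $|F|<1$ back to the sign constraints on $a$ and $\beta$.

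First I would set $F:=f\circ C^{-1}:\mathbb{H}\to\mathbb{D}$. Since $C$ sends $1\mapsto\infty$, geodesics in $\mathfrak{G}^{\mathbb{D}}_{1}$ become horizontal lines in $\mathbb{H}$, while geodesics in $\mathfrak{G}^{\mathbb{D}}_{0}$ remain diameters of $\mathbb{D}$, i.e.\ real lines through the origin. The hypothesis then reads: for every horizontal line $L$ the image $F(L)$ lies in a single diameter. Fixing $t\in\mathbb{R}$, the holomorphic map $h_t(z):=F(z+t)/F(z)$ (defined where $F\neq 0$) is real on every horizontal line, hence real on its entire domain; by the open mapping theorem it must be constant in $z$, and I denote its value by $c(t)\in\mathbb{R}$. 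One also has to exclude zeros of $F$: if $F(z_0)=0$, the resulting identity $F(z_0+t)=c(t)F(z_0)=0$ together with the identity theorem would force $F\equiv 0$. Thus $F$ is nonvanishing and satisfies
\[F(z+t)=c(t)\,F(z),\qquad z\in\mathbb{H},\ t\in\mathbb{R}.\]

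With $F$ nonvanishing and $\mathbb{H}$ simply connected, I write $F=e^g$ for some holomorphic $g:\mathbb{H}\to\mathbb{C}$. Since $c(0)=1$ and $c$ is continuous, $c>0$, so $g(z+t)-g(z)=\log c(t)\in\mathbb{R}$ for all real $t$. Differentiating in $z$ gives $g'(z+t)=g'(z)$; a holomorphic function constant along every horizontal line must itself be constant, so $g'\equiv\alpha$ and $g(z)=\alpha z+\gamma$ for some $\alpha,\gamma\in\mathbb{C}$. Imposing $|F|<1$ on $\mathbb{H}$ reads $\Re(\alpha z+\gamma)<0$ for every $\Re z>0$. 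Because $\alpha\cdot\mathbb{H}$ is a rotated open half-plane, it is contained in a left half-plane iff $\alpha\in\mathbb{R}_{<0}$; letting $\Re z\to 0^{+}$ then forces $\Re\gamma\leq 0$. Setting $a:=\alpha$ and $\beta:=-\gamma$ and substituting $w=C(z)$ recovers the claimed formula.

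The main technical step is the derivation of the functional equation — specifically the argument that $F(z+t)/F(z)$ is independent of $z$ — together with the handling of the potential zeros of $F$ via the identity theorem. Everything afterwards (existence of a holomorphic logarithm, affinity of $g$, and the sign constraints on $a$ and $\Re\beta$) is routine once the functional equation is in hand.
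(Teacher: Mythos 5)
Your proposal follows essentially the same route as the paper: conjugate by the Cayley transform to the right half-plane, derive the functional equation $F(z+t)=c(t)F(z)$ with $c(t)\in\mathbb{R}$, write $F=e^{g}$, conclude that $g$ is affine, and read off the constraints $a<0$, $\Re\beta\geq 0$ from $|F|<1$. In fact you supply several justifications the paper leaves implicit (constancy of $F(z+t)/F(z)$ in $z$ via the open mapping theorem, non-vanishing of $F$ via the identity theorem, positivity of $c$), so the argument is correct and, if anything, more complete than the original.
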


Since geodesics in simply connected planar domains are pull-backs of geodesics in the disc, the previous classification immediately transfers via a Riemann map.

Let $\Omega\subsetneq \mathbb{C}$ be simply connected and let $F:\mathbb{D} \rightarrow \Omega$ be a Riemann map. The Kobayashi metric on $\Omega$ is defined by
$$k_\Omega(z,w)=k_\mathbb{D}(F^{-1}(z),F^{-1}(w)).$$
Geodesics in $\Omega$ are the images under $F$ of geodesics in the unit disc $\mathbb{D}$.
\begin{tw}
    Let $\Omega \subsetneq \mathbb{C}$ be an open  and simply connected set and $F:\mathbb{D} \rightarrow \Omega$ be a biholomorphism. For the family of geodesics $\mathfrak{G^\Omega_a}$, $a\in \Omega$, the geodesic-preserving functions $f$ are precisely 
    $$f= F \circ g \circ F^{-1},$$
    where $g \in GP(\mathfrak{G}^\mathbb{D}_{F^{-1}(a)})$.
\end{tw}
To extend this result to boundary points, the Riemann map must extend continuously as a one-to-one map to the boundary. This holds if and only if $\Omega$ is a Jordan domain.  
\begin{tw}
    Let $\Omega$ be a Jordan domain and let $F:\mathbb{D} \rightarrow \Omega$ be a biholomorphism. For the family of geodesics $\mathfrak{G^\Omega_a}$, $a\in \partial\Omega$, geodesic-preserving functions $f$ of $\mathfrak{G}^\Omega_a$ are exactly 
    $$f= F\circ g \circ F^{-1},$$
    where $g \in GP(\mathfrak{G}^\mathbb{D}_{F^{-1}(a)})$.
\end{tw}

\section{Geodesic-preserving mappings in the unit ball}
We now extend the study to several complex variables.
As in the one-dimensional case, we first consider the family of geodesics passing through the origin, denoted $\mathfrak{G}^{\mathbb{B}^n}_0$. A geodesic passing through the origin is a “diameter,” i.e., a segment $\gamma_\xi = \{ t \xi, t  \in (-1,1) \}$, where $\xi$ is a point in the unit sphere $\partial\mathbb{B}^n$. 
\begin{lem}
    The nonconstant holomorphic maps mapping real lines in $\mathbb{C}^n$ passing through the origin into real lines passing through the origin are homogeneous polynomial maps of degree $m$ . 
\end{lem}

\begin{proof} Let $f = (f_1, \dots, f_n)$ be holomorphic and assume that it maps every real line passing through 0 into a real line passing through 0. 
 Fix a line $L =\mathbb{C}\eta$, $\eta \in \partial\mathbb{B}^n$. The intersection $L \cap \mathbb{B}^n$ is a copy of the unit disc obtained by the mapping $\lambda \mapsto\lambda\eta$, for $\lambda \in \mathbb{D}$. Restricting $f$ to $L \cap \mathbb{B}^n \simeq \mathbb{D}$, we may consider $f$ to be a function of the disc $f:\mathbb{D} \to \mathbb{D}$. Then we may assume that $f$ maps diameters into diameters. Hence, by Theorem \ref{1.1}, $$f(\lambda\eta)=\lambda^{m(\eta)}f(\eta), \ \ \lambda\in \mathbb{D},$$ for some integer $m(\eta)$. Write the Taylor expansion of $f$ around 0
$$f(z) = \sum_{k \in \mathbb{N}} P_k(z),$$
where $P_k$ are homogeneous polynomial maps of degree $k$. Then $$f(\lambda\eta)=\sum_{k} \lambda^kP_k (\eta).$$ Since the left-hand side consists of a single monomial in $\lambda$, for every $\eta \in \partial \mathbb{B}^n$ there exists an integer $m(\eta)$ such that $P_k \equiv0 , \ k\neq m(\eta).$

Let $m$ be the minimal index such that $P_m \not\equiv 0$. Suppose there exists 
$l \neq m$ such that $P_l \not \equiv0$. Since the common zero set of two nonzero polynomial maps cannot contain the whole sphere $\partial\mathbb{B}^n$, there exists $\eta \in \partial\mathbb{B}^n$ such that $$P_m(\eta) \not\equiv0 \quad ,\quad P_l(\eta)\not\equiv 0.$$ 
But this contradicts the fact that, for each fixed $\eta$, at most one homogeneous term $P_k(\eta)$ can be nonzero. Therefore $P_k \equiv 0$ for $k \neq m$ and hence $f =P_m$.

\end{proof}

Restricting Lemma 12 to the unit ball $\mathbb{B}^n$ yields:
\begin{tw}\label{2.1}
    The geodesic-preserving maps $f=(f_1,\dots,f_n)$ for the family of geodesics $\mathfrak{G}^{\mathbb{B}^n}_0$  are constant or homogeneous polynomial maps of degree $m$.
\end{tw}
 The formulas for automorphisms of the unit ball $\mathbb{B}^n$ are given in \cite{Rud} which yields the following corollary.
\begin{cor}
    Automorphisms of $\mathbb{B}^n$ which preserve $\mathfrak{G}^{\mathbb{B}^n}_0$ are unitary mappings$$f(z) = U z,$$ where $U$ is a unitary matrix.
\end{cor}
These results can be extended to the family of geodesics with intersection points $a,b \in \mathbb{B}^n $ by composing with automorphisms of the ball.
\begin{tw}
    Let $a,b \in \mathbb{B}^n$. The geodesic-preserving maps $f\in GP(\mathfrak{G}^{\mathbb{B}^n}_a,\mathfrak{G}^{\mathbb{B}^n}_b)$ have the form $f=M_b \circ g \circ M_a^{-1}$, where $g$ is as in Theorem \ref{2.1}.
\end{tw}
\begin{proof}
   The proof is analogous to that of Theorem 7.
\end{proof}
 We briefly recall elements from Lempert theory needed for studying the case where the intersection points of the families of geodesics lie on the boundary of the unit ball $\partial\mathbb{B}^n$. 

 A map $f: \mathbb{D} \longrightarrow D$ is called a complex geodesic if there exists a holomorphic map $F : D \longrightarrow \mathbb{D}$ such that $F \circ f \in \mathrm{Aut}(\mathbb{D})$; such an $F$ is called a left inverse of $f$. We call a domain $D$ strongly linearly convex if $D$ has a $\mathcal{C}^2 $-smooth boundary and there exists a defining function $r$ of $D$ such that $$\sum_{j,k=1}^{n}\frac{\partial^2 r}{\partial z_j\partial\bar z_k}(a)X_j\bar X_k>\left|\sum_{j,k=1}^{n}\frac{\partial^2 r}{\partial z_j\partial z_k}(a)X_jX_k\right|,\quad a\in\partial D,\; X\in T_D^{\mathbb C}(a) \setminus \{0\},$$ where $T_D^{\mathbb C}(a) = \{X \in \mathbb{C}^n : \sum_{j=1}^n \frac{\partial r}{\partial z_j}(a)X_j=0 \}$. In a strongly linearly convex domain $D$, there exists a biholomorphism $\Phi :D \longrightarrow G$ onto a smooth domain $G \subset \mathbb{C}^n$, $\Phi(f(\xi)) = (\xi, 0, \dots, 0)$ $\xi \in \bar{\mathbb{D}}$ and $G \subset \mathbb{D} \times \mathbb{C}^{n-1}$. Throughout this section we use the fact that in strongly linearly convex domains complex geodesics are uniquely determined (up to an automorphism of $\mathbb{D}$) by their images. 
 
 We refer the reader to \cite{KZ} and \cite{JP} for background on complex geodesics and Lempert domains.

We begin by observing that in strongly linearly convex domains, geodesic preservation restricted to complex geodesics induces complex geodesic preservation. In particular, the unit ball is strongly linearly convex and this lemma will give us some of the tools required to study the boundary point case.

\begin{lem}
     Let $D$ and $G$ be bounded strongly linearly convex domains, and $\mathfrak{G}_1^D$ and $\mathfrak{G}_2^G$ be two families of geodesics in $D$ and $G$ respectively. If $f$ is a geodesic-preseving function between $\mathfrak{G}_1^D$ and $\mathfrak{G}_2^G$, then $f$ maps complex geodesics into complex geodesics.
\end{lem}

\begin{proof}
Let $f$ be a nonconstant mapping. Let $\gamma_1 \in \mathfrak{G}_1^D$ and $\gamma_2 \in \mathfrak{G}_2^G$ be such that $f(\gamma_1)\subset \gamma_2$.  It is well known that in strongly linearly convex domains Kobayashi geodesics lie on complex geodesics and complex geodesics are unique \cite{KW}.

Let $\phi_1$ and $\phi_2$ be complex geodesics such that $\gamma_1\subset \phi_1(\mathbb{D})$ and $\gamma_2\subset \phi_2(\mathbb{D})$. Since $\phi_2(\mathbb{D})$ is an analytic subset of $D$, there exist holomorphic functions $h_1,\ldots,h_m\in\mathcal{O}(D)$ such that $\phi_2(\mathbb{D})=\bigcap_{j=1}^m h_j^{-1}(\{0\})$. For each $j$, the map $h_j\circ f\circ \phi_1:\mathbb{D}\to\mathbb{C}$ is holomorphic. Since $f(\gamma_1)\subset \gamma_2\subset \phi_2(\mathbb{D})$, we have $h_j\circ f\circ \phi_1=0$ on $\phi_1^{-1}(\gamma_1)$. As $\phi_1^{-1}(\gamma_1)$ contains a real interval, it has an accumulation point in $\mathbb{D}$. Therefore, by the identity principle, $h_j\circ f\circ \phi_1\equiv 0$ on $\mathbb{D}$ for every $j$. Consequently, $f(\phi_1(\mathbb{D}))\subset \bigcap_{j=1}^m h_j^{-1}(\{0\})=\phi_2(\mathbb{D})$. Thus $f$ maps complex geodesics into complex geodesics.

\end{proof}

The next theorem allows us to reduce the problem from real geodesics to complex geodesics, whose images under the Cayley transform have a particularly simple form.

 Any complex geodesics $\phi$ in the unit ball with $e_1 = (1,0, \dots,0) \in \overline{\phi(\mathbb{D})}$ can be defined as the intersection of the unit ball with a complex line passing through $e_1$. Concretely, points on such a geodesic have the form $$(z_1,\alpha_1(z_1-1), \dots , \alpha_{n-1}(z_1-1)),$$ where $\alpha_i \in \mathbb{C}$. This geometrical aspect implies that the family of geodesic rays originating from $e_1$ lying on a complex geodesic creates a similar structure to a family of geodesics intersecting on the boundary of the disc. Moreover, every complex geodesic in the ball can be obtained as the image of the horizontal unit disc $\mathbb{D}\times \{0\}^{n-1}$ by an automorphism of the ball. For an exact formula of these automorphisms we refer the reader to \cite{Pie}.

The next goal is to study geodesic-preserving maps using the Cayley transform of the unit ball centered at $e_1$. This transform, 
$$C_{e_1}(z,z')=\left( i\frac{1+z_1}{1-z_1},  i\frac{z'}{1-z_1}\right),$$
is a biholomorphic map of the unit ball $\mathbb{B}^n$ onto a Siegel domain $\mathcal{S}_n = \{ (z_1,z') \in \mathbb{C} \times \mathbb{C}^{n-1} : \Im(z_1) > |z'|^2\} \subset \mathbb{C}^n$. 
Its inverse is $$C^{-1}_{e_1}(z,z')=\left(\frac{2z_1}{i+z_1}-1, \frac{2z'}{i+z_1}\right).$$

The Cayley transform sends complex geodesics in the ball to certain half-planes in the Siegel domain $\mathcal{S}_n$. For example, applying it to the geodesic for $|z_1|<1$
$$\phi (z_1) = (z_1, \alpha_1 ( z_1-1))$$ yields 
$$C_{e_1}(\phi(z_1))=\left(i\frac{1+z_1}{1-z_1}, i\alpha_1\right),$$
which traces out a horizontal half-complex line. 

This also implies that $\Im(i\frac{1+z_1}{1-z_1}) >0$. Therefore real geodesics become vertical real half-lines of the form $$(a+it, i\alpha),\ \  t \geq |\alpha|^2.$$
We refer to these images as Siegel geodesics.

Rephrased in this framework, the problem becomes: find holomorphic maps of the Siegel domain that send Siegel geodesics to Siegel geodesics.

This description immediately restricts the form of geodesic-preserving maps on the Siegel domain. Vertical half-lines must be mapped to vertical half-lines, so the first coordinate may only undergo affine changes in the vertical direction, while the remaining coordinates cannot depend on the first variable. These observations lead to the following characterization. 

\begin{tw}
    Geodesic-preserving maps for a family of geodesics intersecting at a point $a \in \partial\mathbb{B}^n$ preserve complex geodesics intersecting at that point.
\end{tw}

\begin{proof}
   We may assume that the boundary point is $e_1=(1,0,\ldots,0)$. Let $G^{B^n}_{e_1}$ denote the family of geodesics passing through $e_1$, and let $f$ be a geodesic-preserving function for this family. 
   
   Observe that every real geodesic in the unit ball is contained in a complex geodesic. Indeed, real geodesics in $B^n$ are intersections of real geodesics in complex lines with the unit ball, whereas intersections of complex lines with the ball are precisely the complex geodesics. 
   
   Let $\phi:\mathbb D\to B^n$ be a complex geodesic passing through $e_1$. Since complex geodesics through $e_1$ are intersections of the ball with complex lines passing through $e_1$, every real geodesic contained in $\phi(\mathbb D)$ belongs to the family $G^{B^n}_{e_1}$. Since $f$ preserves $G^{B^n}_{e_1}$, the restriction of $f$ to $\phi(\mathbb D)$ maps real geodesics into real geodesics. Therefore, by Lemma 16, $f(\phi(\mathbb D))$ is contained in a complex geodesic. 
   
   Thus $f$ preserves complex geodesics intersecting at $e_1$. Conjugating with automorphisms of the ball gives the result for arbitrary $a\in\partial B^n$.
    
\end{proof}
   \begin{tw}\label{ball}
       Let $f:\mathbb B^n\to\mathbb B^n$ be a nonconstant geodesic-preserving map for the family of geodesics passing through $a \in \partial\mathbb{B}^n$. Let $C: \mathbb{B}^n \to \mathcal{S}_n$ denote the Cayley transform. Then $$f = C^{-1}\circ F\circ C,$$
       where $F:\mathcal{S}_n \to \mathcal{S}_n$ has the form 
       $$F(z_1,z')= (mz_1+b(z'), \tilde{F}(z')),$$
       for $(z_1,z') \in \mathcal{S}_n$, where $m >0$, $b \in \mathcal{O}(\mathbb{C}^{n-1},\mathbb{C})$ and $\tilde{F} \in \mathcal{O}(\mathbb{C}^{n-1})$ satisfies $$|\tilde{F}(z')|^2 \leq m | z'|^2+\Im (b(z')).$$ 
   \end{tw}
   \begin{proof}
       We may assume $a = e_1$. 

By Theorem 17, $f$ preserves complex geodesics intersecting at $e_1$. Under the Cayley transform,
complex geodesics through $e_1$ become horizontal complex half-planes of the form
$$\{(z_1,z')\in\mathcal S_n:z'=a\},$$
where $a\in\mathbb C^{n-1}$ is fixed. Therefore $F$ maps each such half-plane
into another half-plane of the same form. Hence the second component of $F = (F_1, F_2)$ is constant on every such half-plane, so
$$F_2(z_1,z')=\tilde{F}(z')$$
for some holomorphic map $\tilde{F}:\mathbb C^{n-1}\to\mathbb C^{n-1}$.

Now fix $z'\in\mathbb C^{n-1}$. The restriction of $F$ to the half-plane
$$\{(z_1,z'): \Im(z_1)>|z'|^2\}$$
preserves the Siegel geodesics ending at infinity. By the one-dimensional boundary classification, namely the analogue of Theorem 8 after the Cayley transform, the  first component must be affine in $z_1$ with positive real coefficient. Thus
$$F_1(z_1,z')=m(z')z_1+b(z'),$$
where $m(z')>0$ is real-valued.

Since $F_1$ is holomorphic in $z'$ and $m(z')$ is real-valued, $m(z')$ must be constant. Hence

$$F_1(z_1,z')=mz_1+b(z'), \qquad m>0.$$

It remains only to impose the condition that $F$ maps $\mathcal S_n$ into
itself. Since

$$F(z_1,z')=(mz_1+b(z'),\tilde{F}(z')),$$
we need

$$|\tilde{F}(z')|^2 < \Im(mz_1+b(z'))$$
whenever

$$ |z'|^2<\Im(z_1). $$

Equivalently,

$$|\tilde{F}(z')|^2 < m\Im(z_1)+\Im b(z').$$
Taking infimum over all admissible $\Im(z_1)$, we obtain

$$ |\tilde{F}(z')|^2 \leq m|z'|^2+\Im b(z').$$

Conversely, if $F$ has the form

$$ F(z_1,z')=(mz_1+b(z'),\tilde{F}(z'))$$
with $m>0$ and then

$$ |\tilde{F}(z')|^2 \leq m|z'|^2+\Im b(z'),$$
for every $(z_1,z')\in\mathcal S_n$,

$$|\tilde{F}(z')|^2 \leq m|z'|^2+\Im b(z') < m\Im(z_1)+\Im b(z')=\Im(F_1(z_1,z')).$$

Thus $F(\mathcal S_n)\subset\mathcal S_n$. Moreover, it maps horizontal half-planes into horizontal half-planes and, by the one-dimensional boundary classification on each such half-plane, maps vertical Siegel geodesics into vertical Siegel geodesics. Therefore $F$ is geodesic-preserving. Conjugating back by $C$ gives the result for $f$.

   \end{proof}

As in the interior case, these results can be extended to any family of geodesics passing through a point on the unit sphere using automorphisms of the closed unit ball $\overline{\mathbb{B}^n}$ sending the intersection point to $e_1.$
\begin{tw}
    Let $\zeta,\ \xi \in \partial\mathbb{B}^n$. Geodesic-preserving maps $f:\mathbb{B}^n \rightarrow \mathbb{B}^n$ between the families $\mathfrak{G}^{\mathbb{B}^n}_\zeta$ and $\mathfrak{G}^{\mathbb{B}^n}_\xi$ are of the form $$f=\Phi_{\xi}^{-1} \circ g \circ \Phi_\zeta,$$ where $g$ is a function as in Theorem \ref{ball}, $\Phi_\zeta,\Phi_\xi \in \aut(\overline{\mathbb{B}^n})$ map $\zeta$ and $\xi$ respectively to $e_1$.
\end{tw}

\section{Further research}
 
This work provides a detailed description of geodesic-preserving holomorphic functions on the disc and the unit ball for specific families of geodesics, but several natural cases remain open. Notably, we have not addressed the situation in which one family intersects in the unit sphere while the other intersects at the origin. One may also ask for a description of geodesic-preserving maps for arbitrary complete families of geodesics in these domains.

Additionally, extensions to more general domains in $\mathbb{C}$ and $\mathbb{C}^n$ may be possible. Because the definition of a geodesic-preserving function is quite broad, further exploration in diverse geometric or metric settings appears promising.

\section*{Acknowledgement}

This study was partially funded by the National Science Center, Poland, SHENG III, research project 2023/48/Q/ST1/00048.

The author would like to thank Łukasz Kosiński for his valuable help and the anonymous referee for the constructive suggestions and remarks which significantly improved the quality and readability of the paper.
\section*{Disclosure statement}

 The author reports there are no competing interests to declare.

Marcin Tombiński,

 Doctoral School of Exact and Natural Sciences

 Institute of Mathematics,
 
 Faculty of Mathematics and Computer Science,
 
 Jagiellonian University
 
 Lojasiewicza 6
 
 PL30348, Cracow, Poland
 
 marcin.tombinski@doctoral.uj.edu.pl
\end{document}